\def\ps@pprintTitle{%
 \let\@oddhead\@empty
 \let\@evenhead\@empty
 \def\@oddfoot{\centerline{\thepage}}%
 \let\@evenfoot\@oddfoot}
\journal{}
\newtheorem{theorem}{Theorem}
\newtheorem{corollary}[theorem]{Corollary}
\newtheorem*{open}{Open problem}
\newcommand{\N}{\mathbb{N}}
\newcommand{\R}{\mathbb{R}}
\newcommand{\EE}{\mathsf{E}}
\newcommand{\CE}{\mathsf{CE}}
\newcommand{\CVar}{\mathsf{CVar}}
\newcommand{\OO}{\mathcal{O}}
\newcommand{\ii}{\mathrm{i}}
\newcommand{\vv}{\sigma}
\newcommand{\sig}{\nu_{\kappa}}
\newcommand{\leqdef}{\vcentcolon=}
\newcommand{\reqdef}{=\vcentcolon}
\newcommand{\rd}{{\rm d}}
\newcommand{\fred}[1]{{\color{red} ***#1}}
\begin{document}

\begin{frontmatter}

    \title{A bridge between the circular and linear normal distributions}

    \author[a1]{Fr\'ed\'eric Ouimet}\ead{frederic.ouimet2@mcgill.ca}

    \address[a1]{Department of Mathematics and Statistics, McGill University, Montr\'eal (Qu\'ebec) Canada H3A 0B9}

    \begin{abstract}
        In this short note, we present a refined approximation for the log-ratio of the density of the von~Mises$(\mu,\kappa)$ distribution (also called the circular normal distribution) to the standard (linear) normal distribution when the concentration parameter $\kappa$ is large. Our work complements the one of \citet{doi:10.2307/2335751}, who obtained a very similar approximation along with quantile couplings, using earlier approximations by \citet{MR226762} of Cornish-Fisher type. One motivation for this note is to highlight the connection between the circular and linear normal distributions through their circular variance and (linear) variance.
    \end{abstract}

    \begin{keyword}
        asymptotic distribution \sep circular data \sep circular variance \sep circular normal distribution \sep circular statistics \sep directional statistics \sep local approximation \sep local limit theorem \sep normal approximation \sep variance \sep von Mises distribution \sep wrapped normal distribution
        \MSC[2020]{Primary: 62E20 Secondary: 62E10}
    \end{keyword}

\end{frontmatter}

\section{Introduction}\label{sec:intro}

For any reals $\mu\in [0,2\pi)$ and $\kappa\in (0,\infty)$, the density function of the von~Mises distribution (also referred to as the circular normal distribution) with mean direction $\mu$ and concentration parameter~$\kappa$, henceforth denoted by $\mathrm{von~Mises}\hspace{0.2mm}(\mu,\kappa)$, is defined by
\begin{equation}\label{eq:von.Mises.density}
f_{\mu,\kappa}(\theta) = \frac{\exp\{\kappa \cos(\theta - \mu)\}}{2\pi I_0(\kappa)}, \quad \theta\in [0,2\pi),
\end{equation}
where $I_{\nu}$ denotes the modified Bessel function of the first kind of order $\nu\in \N_0 = \{0,1,\ldots\}$. The circular expectation and variance of $\Theta\sim \mathrm{von~Mises}\hspace{0.2mm}(\mu,\kappa)$ are well known to be
\begin{equation}\label{eq:noncentral.von.Mises.mean.variance}
\CE(\Theta) \equiv \mathrm{arg}\{\EE(e^{\ii \Theta})\} = \mu \quad \text{and} \quad \CVar(\Theta) \equiv 1 - \EE\{\cos(\Theta - \mu)\} = 1 - \frac{I_1(\kappa)}{I_0(\kappa)} \reqdef \sig^2,
\end{equation}
see, e.g., Section~3.5.4 of \citet{MR1828667}. As $\kappa\to \infty$, note that
\begin{equation}\label{eq:I.0.I.1}
\begin{aligned}
&I_0(\kappa) \sim \frac{e^{\kappa}}{\sqrt{2\pi \kappa}} \left\{1 + \frac{1}{8\kappa} + \frac{9}{128 \kappa^2} + \frac{75}{1024 \kappa^3} + \OO(\kappa^{-4})\right\}, \\
&I_1(\kappa) \sim \frac{e^{\kappa}}{\sqrt{2\pi \kappa}} \left\{1 - \frac{3}{8\kappa} - \frac{15}{128 \kappa^2} - \frac{105}{1024 \kappa^3} + \OO(\kappa^{-4})\right\},
\end{aligned}
\end{equation}
see, e.g., \cite[p.377]{MR0167642}, so that
\begin{equation}\label{eq:I.0.I.1.next}
\sig^2 = 1 - \frac{I_1(\kappa)}{I_0(\kappa)} = \frac{1}{2 \kappa} + \frac{1}{8 \kappa^2} + \frac{1}{8 \kappa^3} + \OO(\kappa^{-4}), \quad \kappa\to \infty.
\end{equation}
In particular, one has $\sig\to 0$ as $\kappa\to \infty$.

\newpage
The main goal of the present paper (Theorem~\ref{thm:LLT.von.Mises}) is to establish a local asymptotic expansion, as $\kappa\to \infty$, for the log-ratio of the von Mises density~\eqref{eq:von.Mises.density} to the (linear) $\mathrm{Normal}\hspace{0.2mm}(\mu,2\sig^2)$ density, viz.,
\begin{equation}\label{eq:phi.M}
\frac{1}{\sqrt{2} \sig} \phi\left(\frac{\delta_x}{\sqrt{2}}\right), \quad \text{where } \phi(z) \leqdef \frac{\exp(-z^2/2)}{\sqrt{2\pi}} \quad \text{and} \quad \delta_x \leqdef \frac{x - \mu}{\sqrt{1 - I_1(\kappa) / I_0(\kappa)}}.
\end{equation}
Notice the extra scaling factor $\sqrt{2}$ for the (linear) standard deviation in~\eqref{eq:phi.M}. This factor would not appear if the expected limiting distribution was the usual $\mathrm{Normal}\hspace{0.2mm}(\mu,\sig^2)$ distribution.

\vspace{3mm}
To explain the extra scaling factor, consider a wrapped normal distribution with mean direction~$\mu\in [0,2\pi)$ and scaling parameter $\vv\in (0,\infty)$, henceforth denoted by $\mathrm{WNormal}\hspace{0.2mm}(\mu,\vv^2)$. Its density function is given by the infinite sum
\begin{equation}\label{eq:wrapped.normal.density}
g_{\mu,v^2}(\theta) = \sum_{k=-\infty}^{\infty} \frac{1}{\vv \sqrt{2\pi}} \exp\left\{-\frac{1}{2\vv^2} (\theta - \mu + 2\pi k)^2\right\}, \quad \theta\in [0,2\pi),
\end{equation}
see, e.g., p.~50-51 of \citet{MR1828667} or Section~2.2.6 of \citet{MR1836122}; it has the same support as the von Mises distribution. The $\mathrm{WNormal}\hspace{0.2mm}(\mu,\vv^2)$ distribution can be visualized as a $\mathrm{Normal}\hspace{0.2mm}(\mu,\vv^2)$ distribution ``folded'' onto the unit circle. When the scaling parameter $\vv$ of the underlying normal distribution is small, most of the distribution's mass will be located near the mean direction $\mu$, minimizing the ``wrapping'' effect. Indeed, the expression~\eqref{eq:wrapped.normal.density} essentially sums the normal densities from each ``wrap'' around the circle. For a small $\vv$, the majority of the distribution's weight is around the mean direction $\mu$, and the contributions from terms with $k \neq 0$ (i.e., terms from subsequent wraps around the circle) will be minimal. It is well known to be a limiting distribution of the $\mathrm{von~Mises}\hspace{0.2mm}(\mu,\kappa)$ distribution when the concentration parameter $\kappa$ tends to infinity, see \citet{MR501259} or Eq.~(3.5.24) of \citet{MR1828667} for details. The circular expectation and variance of $\Theta\sim \mathrm{WNormal}\hspace{0.2mm}(\mu,\vv^2)$ are
\begin{equation}\label{eq:noncentral.von.Mises.mean.variance}
\CE(\Theta) \equiv \mathrm{arg}\{\EE(e^{\ii \Theta})\} = \mu \quad \text{and} \quad \CVar(\Theta) \equiv 1 - \EE\{\cos(\Theta - \mu)\} = 1 - \exp(-\vv^2/2).
\end{equation}
The circular variance provides a measure of the dispersion of the distribution on the circle and is derived using the so-called mean resultant length, see, e.g., Section~2.3.1 of \citet{MR1828667}. For small $\vv$, the mean resultant length is close to $1$. Specifically, when $\vv^2$ is small, one has the first-order Taylor approximation
\[
\CVar(\Theta) = 1 - \exp(-\vv^2/2) \approx \frac{\vv^2}{2} = \left(\frac{\vv}{\sqrt{2}}\right)^2,
\]
where the aforementioned extra scaling factor $\sqrt{2}$ appears again. The asymptotic relationship between the circular variance of the von Mises distribution and the corresponding limiting wrapped normal distributions is therefore
\[
\sig = \frac{\vv}{\sqrt{2}}, \quad \text{or equivalently,} \quad \vv = \sqrt{2} \sig.
\]
This explains the extra scaling factor $\sqrt{2}$ in \eqref{eq:phi.M}; it serves as a pivotal scaling relationship between the unit circle and the real line. In essence, it accounts for the additional variance caused by the subsequent wraps ($k\neq 0$) around the unit circle. This connection is also mentioned for example in Eq.~(3.4.16) of \citet{MR1828667}.

\section{Main results}

Below, we state and prove a local approximation for the ratio of the von Mises density to the linear normal density function with appropriate mean and variance. A very similar approximation was proved decades ago by \citet{doi:10.2307/2335751} along with quantile couplings, using earlier approximations by \citet{MR226762} of Cornish-Fisher type. Regarding other univariate continuous distributions, local approximations analogous to Theorem~\ref{thm:LLT.von.Mises} and Corollary~\ref{cor:cdf} below can be found for the central/noncentral chi-square distribution in \citet{MR4466042} and for the Student distribution in \citet{MR4449387}.

Throughout the paper, the notation $u = \OO(v)$ means that
\[
\limsup_{\kappa\to \infty} |u / v| \leq C \quad \text{or} \quad \limsup_{y\to \infty} |u / v| \leq C
\]
for some universal positive constant $C\in (0,\infty)$, depending on the context. Whenever $C$ might depend on some parameter, we add a subscript. For example, $u = \OO_{\eta}(v)$.

\begin{theorem}[Local approximation]\label{thm:LLT.von.Mises}
Let $\mu\in \R$ and $\eta\in (0,1)$ be given. For any $\kappa\in (0,\infty)$,
\begin{equation}\label{eq:bulk}
B_{\mu,\kappa}(\eta) \leqdef \bigg\{x\in (0,\infty) : |\delta_x| \leq \eta \, \kappa^{1/2}\bigg\},
\end{equation}
denotes the bulk of the von Mises distribution. Then, uniformly for $x\in B_{\mu,\kappa}(\eta)$, one has
\begin{equation}\label{eq:thm:LLT.von.Mises.eq.log}
\begin{aligned}
\log\left\{\frac{\sqrt{2} \sig f_{\mu,\kappa}(x)}{\phi(\widetilde{\delta}_x)}\right\}
&= \frac{1}{\kappa} \left(\frac{\widetilde{\delta}_x^4}{24} - \frac{\widetilde{\delta}_x^2}{8}\right) + \frac{1}{\kappa^2} \left(- \frac{\widetilde{\delta}_x^6}{720} + \frac{\widetilde{\delta}_x^4}{48} - \frac{\widetilde{\delta}_x^2}{8} + \frac{3}{64}\right) + \OO_{\eta}\left(\frac{1 + |\widetilde{\delta}_x|^8}{\kappa^3}\right),
\end{aligned}
\end{equation}
as $\kappa\to \infty$, where $\widetilde{\delta}_x = \delta_x / \sqrt{2}$. Moreover, uniformly for $x\in B_{\mu,\kappa}(\kappa^{-1/4} \widetilde{\eta})$ given some $\widetilde{\eta}\in (0,\infty)$, one has
\begin{equation}\label{eq:thm:LLT.von.Mises.eq}
\begin{aligned}
\frac{\sqrt{2} \sig f_{\mu,\kappa}(x)}{\phi(\widetilde{\delta}_x)}
&= 1 + \frac{1}{\kappa} \left(\frac{\widetilde{\delta}_x^4}{24} - \frac{\widetilde{\delta}_x^2}{8}\right) + \frac{1}{\kappa^2} \left(\frac{\widetilde{\delta}_x^8}{1152} - \frac{19 \widetilde{\delta}_x^6}{2880} + \frac{11 \widetilde{\delta}_x^4}{384} - \frac{\widetilde{\delta}_x^2}{8} + \frac{3}{64}\right) + \OO_{\widetilde{\eta}}\left(\frac{1 + |\widetilde{\delta}_x|^{12}}{\kappa^3}\right).
\end{aligned}
\end{equation}
\end{theorem}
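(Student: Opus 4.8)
The plan is to compute the left‑hand side of \eqref{eq:thm:LLT.von.Mises.eq.log} exactly, expand every factor in powers of $\kappa$, and re‑express each occurrence of $x-\mu$ through $\widetilde{\delta}_x$. Setting $t \leqdef x-\mu$ and using \eqref{eq:von.Mises.density} together with $\phi(z) = e^{-z^2/2}/\sqrt{2\pi}$, one gets the exact identity
\[
\log\!\left\{\frac{\sqrt{2}\,\sig\, f_{\mu,\kappa}(x)}{\phi(\widetilde{\delta}_x)}\right\} = A_\kappa + \kappa\left(\cos t - 1\right) + \frac{\widetilde{\delta}_x^2}{2}, \qquad A_\kappa \leqdef \frac{1}{2}\log(2\sig^2) - \frac{1}{2}\log(2\pi) - \log I_0(\kappa) + \kappa .
\]
Since $t = \sqrt{2}\,\sig\,\widetilde{\delta}_x$, one has $t^{2n} = (2\sig^2)^n\widetilde{\delta}_x^{2n}$, so Taylor expanding the cosine (using $|\cos t - \sum_{k=0}^{3}(-1)^k t^{2k}/(2k)!| \le t^8/8!$ for every real $t$) gives
\[
\kappa\left(\cos t - 1\right) = -\kappa\sig^2\,\widetilde{\delta}_x^2 + \frac{\kappa\sig^4}{6}\,\widetilde{\delta}_x^4 - \frac{\kappa\sig^6}{90}\,\widetilde{\delta}_x^6 + \OO\!\left(\kappa\sig^8\,\widetilde{\delta}_x^8\right).
\]

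I would then feed in \eqref{eq:I.0.I.1} and \eqref{eq:I.0.I.1.next}. From \eqref{eq:I.0.I.1.next}, $2\sig^2 = \kappa^{-1}\{1 + \tfrac14\kappa^{-1} + \tfrac14\kappa^{-2} + \OO(\kappa^{-3})\}$, whence $\kappa\sig^2 = \tfrac12 + \tfrac1{8\kappa} + \tfrac1{8\kappa^2} + \OO(\kappa^{-3})$, $\kappa\sig^4 = \tfrac1{4\kappa} + \tfrac1{8\kappa^2} + \OO(\kappa^{-3})$, $\kappa\sig^6 = \tfrac1{8\kappa^2} + \OO(\kappa^{-3})$, $\kappa\sig^8 = \OO(\kappa^{-3})$, and, taking logarithms, $\tfrac12\log(2\sig^2) = -\tfrac12\log\kappa + \tfrac1{8\kappa} + \tfrac7{64\kappa^2} + \OO(\kappa^{-3})$; from \eqref{eq:I.0.I.1}, $\log I_0(\kappa) = \kappa - \tfrac12\log(2\pi\kappa) + \tfrac1{8\kappa} + \tfrac1{16\kappa^2} + \OO(\kappa^{-3})$. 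Substituting these into $A_\kappa$, the $\log\kappa$, $\log(2\pi)$, $\kappa$ and $\kappa^{-1}$ contributions all cancel pairwise, leaving $A_\kappa = \tfrac3{64}\kappa^{-2} + \OO(\kappa^{-3})$. Adding $\widetilde{\delta}_x^2/2$ to $-\kappa\sig^2\widetilde{\delta}_x^2$ and collecting the coefficients of $1$, $\widetilde{\delta}_x^2$, $\widetilde{\delta}_x^4$, $\widetilde{\delta}_x^6$ (organised by powers of $\kappa^{-1}$) then reproduces the two polynomials in \eqref{eq:thm:LLT.von.Mises.eq.log}.

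The step I expect to require the most care is checking that the leftover remainders really combine into $\OO_\eta((1 + |\widetilde{\delta}_x|^8)/\kappa^3)$ \emph{uniformly over the whole bulk} $B_{\mu,\kappa}(\eta)$, not merely near the mode. The relevant facts are: (i) on $B_{\mu,\kappa}(\eta)$ one has $|t| = \sig|\delta_x| \le \eta\,\sig\,\kappa^{1/2} = \tfrac{\eta}{\sqrt2}(1 + \OO(\kappa^{-1}))$, which stays bounded by a constant strictly below $\pi$, so the cosine manipulations above are legitimate; (ii) $\sig^2 = \OO(\kappa^{-1})$ as a function of $\kappa$ alone, so $\kappa\sig^8\widetilde{\delta}_x^8 = \OO(\kappa^{-3}\widetilde{\delta}_x^8)$, and the $\OO(\kappa^{-3})$ error of $A_\kappa$ as well as the $\OO(\kappa^{-3})$ errors in the coefficients of $\widetilde{\delta}_x^2$, $\widetilde{\delta}_x^4$, $\widetilde{\delta}_x^6$ are all dominated by $\OO_\eta(\kappa^{-3}(1 + |\widetilde{\delta}_x|^8))$ once one uses $|\widetilde{\delta}_x|^{2j} \le 1 + |\widetilde{\delta}_x|^8$ for $j \le 4$. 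Since $\mu$ enters only through $t$, no constant depends on it.

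Finally, to obtain \eqref{eq:thm:LLT.von.Mises.eq} I would exponentiate the expansion \eqref{eq:thm:LLT.von.Mises.eq.log}, which may be applied with a fixed $\eta$, say $\eta = \tfrac12$, since $B_{\mu,\kappa}(\kappa^{-1/4}\widetilde{\eta}) \subseteq B_{\mu,\kappa}(\tfrac12)$ for $\kappa$ large. On this narrower range $\widetilde{\delta}_x^4/\kappa \le \widetilde{\eta}^4/4$, so the right‑hand side $L$ of \eqref{eq:thm:LLT.von.Mises.eq.log} satisfies $|L| = \OO_{\widetilde{\eta}}(1)$; writing $L = \kappa^{-1}a_1 + \kappa^{-2}a_2 + R$ with $a_1 = \widetilde{\delta}_x^4/24 - \widetilde{\delta}_x^2/8$, $a_2$ the $\kappa^{-2}$‑polynomial of \eqref{eq:thm:LLT.von.Mises.eq.log}, and $R = \OO(\kappa^{-3}(1 + |\widetilde{\delta}_x|^8))$, the elementary inequality $|e^L - 1 - L - \tfrac12 L^2| \le \tfrac16|L|^3 e^{|L|}$ together with the bounds $|\widetilde{\delta}_x|^{2j} \le 1 + |\widetilde{\delta}_x|^{12}$ yields $e^L = 1 + \kappa^{-1}a_1 + \kappa^{-2}(a_2 + \tfrac12 a_1^2) + \OO_{\widetilde{\eta}}(\kappa^{-3}(1 + |\widetilde{\delta}_x|^{12}))$, the $|\widetilde{\delta}_x|^{12}$ arising from $a_1^3$. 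Since $\tfrac12 a_1^2 = \widetilde{\delta}_x^8/1152 - \widetilde{\delta}_x^6/192 + \widetilde{\delta}_x^4/128$, adding it to $a_2$ produces exactly the $\kappa^{-2}$‑polynomial displayed in \eqref{eq:thm:LLT.von.Mises.eq}, which completes the proof.
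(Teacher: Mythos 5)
Your proposal is correct and follows essentially the same route as the paper: take the logarithm of the density ratio, Taylor-expand $\kappa\cos(x-\mu)$ in powers of $\kappa^{-1}$ using the expansion of $\sig^2$, combine with the Bessel/log asymptotics to isolate the constant $\tfrac{3}{64}\kappa^{-2}$, and then exponentiate on the narrower bulk, where your $\tfrac12 a_1^2$ correction reproduces the paper's $\kappa^{-2}$ polynomial exactly. The only differences are cosmetic (working with $t=x-\mu$ and $\widetilde{\delta}_x$ directly, a global cosine remainder bound, and a more explicit uniformity check), so no further changes are needed.
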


\begin{open}
Extend the result of Theorem~\ref{thm:LLT.von.Mises} to the more general (spherical) von Mises-Fisher distribution. For the definition, see, e.g., Section~9.3.2 of \citet{MR1828667}.
\end{open}

Next, as a corollary, we integrate the local density approximation of Theorem~\ref{thm:LLT.von.Mises} to obtain a normal local approximation for the von Mises cumulative distribution function.

\begin{corollary}\label{cor:cdf}
Using the same notation as in Theorem~\ref{thm:LLT.von.Mises}, one has, uniformly for $x\in B_{\mu,\kappa}(\eta)$ and as $\kappa\to \infty$,
\[
\begin{aligned}
F_{\mu,\kappa}(x)
&\leqdef \int_{-\infty}^x f_{\mu,\kappa}(y) \rd y \\
&= \Phi(\widetilde{\delta}_x) + \phi(\widetilde{\delta}_x) \left\{\frac{1}{\kappa} \left(\frac{\widetilde{\delta}_x^3}{24}\right) + \frac{1}{\kappa^2} \left(\frac{\widetilde{\delta}_x^7}{1152} - \frac{\widetilde{\delta}_x^5}{1920} + \frac{5 \widetilde{\delta}_x^3}{192} - \frac{3 \widetilde{\delta}_x}{64}\right)\right\} + \OO_{\widetilde{\eta}}\left(\frac{1 + |\widetilde{\delta}_x|^{11}}{\kappa^3}\right),
\end{aligned}
\]
where
\[
\Phi(z) \leqdef \int_{-\infty}^z \phi(y) \rd y, \quad z\in \R,
\]
denotes the cumulative distribution function of the standard normal distribution.
\end{corollary}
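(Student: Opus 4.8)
The plan is to integrate the density expansion~\eqref{eq:thm:LLT.von.Mises.eq} term by term after a change of variables. Abbreviate $P_1(t) = t^4/24 - t^2/8$ and $P_2(t) = t^8/1152 - 19 t^6/2880 + 11 t^4/384 - t^2/8 + 3/64$, so that on $B_{\mu,\kappa}(\kappa^{-1/4}\widetilde\eta)$ one has $f_{\mu,\kappa}(y) = \frac{\phi(\widetilde\delta_y)}{\sqrt2\,\sig}\big[1 + \kappa^{-1} P_1(\widetilde\delta_y) + \kappa^{-2} P_2(\widetilde\delta_y) + \OO_{\widetilde\eta}(\kappa^{-3}(1 + |\widetilde\delta_y|^{12}))\big]$. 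Because $\widetilde\delta_y = (y - \mu)/(\sqrt2\,\sig)$ and $\sig$ does not depend on $y$, the substitution $t = \widetilde\delta_y$ gives $\rd y = \sqrt2\,\sig\,\rd t$, which cancels the prefactor $1/(\sqrt2\,\sig)$ exactly; so, up to the endpoint/tail adjustments below, $F_{\mu,\kappa}(x) = \int_{-\infty}^{\widetilde\delta_x}\phi(t)\,\rd t + \kappa^{-1}\!\int_{-\infty}^{\widetilde\delta_x}\!\phi(t) P_1(t)\,\rd t + \kappa^{-2}\!\int_{-\infty}^{\widetilde\delta_x}\!\phi(t) P_2(t)\,\rd t + \OO_{\widetilde\eta}(\kappa^{-3})\!\int_{-\infty}^{\widetilde\delta_x}\!\phi(t)(1 + |t|^{12})\,\rd t$.

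First I would justify two reductions: replacing the left endpoint of the support (whose image under $t = \widetilde\delta_y$ tends to $-\infty$) by $-\infty$, and passing from the range $B_{\mu,\kappa}(\eta)$ allowed for $x$ to the smaller set $B_{\mu,\kappa}(\kappa^{-1/4}\widetilde\eta)$ on which~\eqref{eq:thm:LLT.von.Mises.eq} is available. Both follow from a crude bound on $f_{\mu,\kappa}$ away from $\mu$: combining the explicit density~\eqref{eq:von.Mises.density} with an elementary inequality such as $\cos u \le 1 - 2 u^2/\pi^2$ for $|u| \le \pi$ and the Bessel asymptotics~\eqref{eq:I.0.I.1} yields $f_{\mu,\kappa}(y) = \OO(\kappa^{1/2}\exp(-c\,\widetilde\delta_y^2))$ throughout $B_{\mu,\kappa}(\eta)$ (with an even smaller bound off $B_{\mu,\kappa}(\eta)$), for some $c > 0$. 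Since $|\widetilde\delta_y| \gtrsim \kappa^{1/4}$ on the complement of $B_{\mu,\kappa}(\kappa^{-1/4}\widetilde\eta)$, the $f_{\mu,\kappa}$-mass of that complement is $\OO(\exp(-c'\kappa^{1/2}))$; hence, for $x$ in that complement but still in $B_{\mu,\kappa}(\eta)$, both $F_{\mu,\kappa}(x)$ and the claimed main term $\Phi(\widetilde\delta_x) + \phi(\widetilde\delta_x)\{\cdots\}$ lie within $\OO(\exp(-c'\kappa^{1/2}))$ of $0$ or of $1$, and so agree to within $\oo(\kappa^{-3})$. This tail/region-splitting step is the main technical obstacle; once it is settled the problem is reduced to $x \in B_{\mu,\kappa}(\kappa^{-1/4}\widetilde\eta)$ with the $t$-integral over $(-\infty, \widetilde\delta_x]$, the discarded left piece $(-\infty, -\widetilde\eta\kappa^{1/4}/\sqrt2]$ again contributing $\oo(\kappa^{-3})$.

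It then remains to evaluate the integrals. The leading one is $\int_{-\infty}^{\widetilde\delta_x}\phi(t)\,\rd t = \Phi(\widetilde\delta_x)$. For the polynomial corrections I would iterate the integration-by-parts identity $\int_{-\infty}^z t^k \phi(t)\,\rd t = -z^{k-1}\phi(z) + (k-1)\int_{-\infty}^z t^{k-2}\phi(t)\,\rd t$ for $k \ge 1$ (equivalently, expand $P_1, P_2$ in the probabilists' Hermite polynomials and use $\int_{-\infty}^z \phi(t) H_n(t)\,\rd t = -\phi(z) H_{n-1}(z)$). The structural point is that $\int_{\R}\phi(t) P_j(t)\,\rd t = 0$ for $j \in \{1, 2\}$ — forced by $\int f_{\mu,\kappa} = 1 = \int \phi$, and directly checkable from $\int_{\R}\phi(t) t^{2m}\,\rd t = (2m-1)!!$ — so the $\Phi$-contributions of $P_1$ and $P_2$ cancel and each $\int_{-\infty}^{\widetilde\delta_x}\phi(t) P_j(t)\,\rd t$ collapses to $\phi(\widetilde\delta_x)$ times an explicit odd polynomial in $\widetilde\delta_x$, of degree $3$ for $j = 1$ and degree $7$ for $j = 2$; dividing by $\kappa$ and $\kappa^2$ and collecting gives the bracketed term $\phi(\widetilde\delta_x)\{\cdots\}$. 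The remainder is bounded by the same recursion: $\int_{-\infty}^z |t|^{12}\phi(t)\,\rd t$ equals a degree-$11$ polynomial times $\phi(z)$ plus a bounded multiple of $\Phi(z)$, hence is $\OO(1 + |z|^{11})$, which produces the stated error $\OO_{\widetilde\eta}(\kappa^{-3}(1 + |\widetilde\delta_x|^{11}))$. Apart from the reduction in the previous paragraph, everything here is routine moment bookkeeping.
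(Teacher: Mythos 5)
Your route is essentially the paper's: integrate the density expansion~\eqref{eq:thm:LLT.von.Mises.eq} of Theorem~\ref{thm:LLT.von.Mises} after the substitution $t=\widetilde{\delta}_y$ and evaluate the resulting incomplete Gaussian moments in closed form. The paper does this on the upper tail, writing $\int_x^\infty f_{\mu,\kappa}(y)\,\rd y$ in terms of $\Psi_j(z)=\int_z^\infty y^j\phi(y)\,\rd y$, while you work on the lower tail, use $\int_{\R}\phi P_j=0$, and run the integration-by-parts recursion; these are the same computation. Your explicit reduction from $B_{\mu,\kappa}(\eta)$ to $B_{\mu,\kappa}(\kappa^{-1/4}\widetilde{\eta})$, where \eqref{eq:thm:LLT.von.Mises.eq} is actually available, and your treatment of the support endpoints via the crude bound $f_{\mu,\kappa}(y)=\OO\big(\kappa^{1/2}e^{-c\widetilde{\delta}_y^{2}}\big)$ are correct and are a genuine addition: the paper passes over this step in silence.

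The gap is in the one step you dismiss as routine bookkeeping. Executing your own recursion gives
\[
\int_{-\infty}^{z}\phi(t)P_1(t)\,\rd t=-\frac{z^{3}}{24}\,\phi(z),
\qquad
\int_{-\infty}^{z}\phi(t)P_2(t)\,\rd t=-\left(\frac{z^{7}}{1152}-\frac{z^{5}}{1920}+\frac{5z^{3}}{192}-\frac{3z}{64}\right)\phi(z),
\]
that is, \emph{minus} the brackets appearing in the statement; at order $\kappa^{-1}$ this is immediate from $\int_{-\infty}^z t^4\phi(t)\,\rd t=3\Phi(z)-(z^3+3z)\phi(z)$ and $\int_{-\infty}^z t^2\phi(t)\,\rd t=\Phi(z)-z\phi(z)$, whose $\Phi$-parts cancel and whose $\phi$-part is $-z^3/24$, not $+z^3/24$. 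So a faithful execution of your plan yields $F_{\mu,\kappa}(x)=\Phi(\widetilde{\delta}_x)-\phi(\widetilde{\delta}_x)\{\cdots\}+\OO_{\widetilde{\eta}}\big((1+|\widetilde{\delta}_x|^{11})/\kappa^{3}\big)$, and your assertion that ``collecting gives the bracketed term'' hides a sign flip. This is consistent with the paper's own intermediate display: integrating from $x$ to $\infty$ gives $\Psi(\widetilde{\delta}_x)+\phi(\widetilde{\delta}_x)\{\widetilde{\delta}_x^{3}/(24\kappa)+\cdots\}$, after which $F_{\mu,\kappa}(x)=1-\int_x^\infty f_{\mu,\kappa}(y)\,\rd y$ negates the $\phi$-terms --- a negation the paper's final line (and hence the corollary as printed) does not record. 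You should therefore write out the two integrals explicitly and either prove the sign-corrected display, flagging the discrepancy with the statement, or acknowledge that no correct execution of the proposed computation can produce the statement exactly as printed; as it stands, the proposal asserts an identity for $\int_{-\infty}^{z}\phi P_j$ that is false, and this is precisely the point where the sign of the whole correction is decided.
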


\section{Proofs}\label{sec:proofs}

\begin{proof}[Proof of Theorem~\ref{thm:LLT.von.Mises}]
By taking the logarithm in~\eqref{eq:von.Mises.density}, one has
\begin{equation}\label{eq:LLT.beginning}
\begin{aligned}
\log\left\{\frac{\sqrt{2} \sig f_{\mu,\kappa}(x)}{\phi(\delta_x / \sqrt{2})}\right\}
&= \kappa \cos(x - \mu) - \frac{1}{2} \log(2\pi) - \log \{I_0(\kappa)\} \\[-2mm]
&\quad+ \frac{1}{2} \log 2 + \frac{1}{2} \log \sig^2 + \frac{1}{4} \delta_x^2.
\end{aligned}
\end{equation}
Since
\begin{equation}
x - \mu = \sqrt{\delta_x^2 \sig^2} = \sqrt{\delta_x^2 \left\{\frac{1}{2 \kappa} + \frac{1}{8 \kappa^2} + \frac{1}{8 \kappa^3} + \OO(\kappa^{-4})\right\}},
\end{equation}
(recall \eqref{eq:I.0.I.1.next}) and
\begin{equation}
\cos(\sqrt{y}) = 1 - \frac{y}{2} + \frac{y^2}{24} - \frac{y^3}{720} + \OO(y^4), \quad |y| < 1,
\end{equation}
we know that
\begin{equation}\label{eq:cos.expansion}
\begin{aligned}
\kappa \cos(x - \mu)
&= \kappa \left[1 - \frac{\delta_x^2}{2} \left\{\frac{1}{2 \kappa} + \frac{1}{8 \kappa^2} + \frac{1}{8 \kappa^3} + \OO(\kappa^{-4})\right\} + \frac{\delta_x^4}{24} \left\{\frac{1}{2 \kappa} + \frac{1}{8 \kappa^2} + \frac{1}{8 \kappa^3} + \OO(\kappa^{-4})\right\}^2 \right. \\[1mm]
&\qquad \left.- \frac{\delta_x^6}{720} \left\{\frac{1}{2 \kappa} + \frac{1}{8 \kappa^2} + \frac{1}{8 \kappa^3} + \OO(\kappa^{-4})\right\}^3 + \OO_{\eta}\left(\frac{1 + |\delta_x|^8}{\kappa^4}\right)\right] \\
&= \kappa - \frac{1}{4} \delta_x^2 + \frac{1}{\kappa} \left(\frac{\delta_x^4}{96} - \frac{\delta_x^2}{16}\right) + \frac{1}{\kappa^2} \left(- \frac{\delta_x^6}{5760} + \frac{\delta_x^4}{192} - \frac{\delta_x^2}{16}\right) + \OO_{\eta}\left(\frac{1 + |\delta_x|^8}{\kappa^3}\right).
\end{aligned}
\end{equation}
By~\eqref{eq:I.0.I.1},~\eqref{eq:I.0.I.1.next} and the Taylor expansion,
\begin{equation}
\log(1 + y) = y - \frac{y^2}{2} + \frac{y^3}{3} + \OO(y^4), \quad |y| < 1,
\end{equation}
one has
\begin{equation}
\begin{aligned}
- \log \{I_0(\kappa)\} &= - \kappa + \frac{1}{2} \log (2\pi \kappa) - \frac{1}{8 \kappa} - \frac{1}{16 \kappa^2} - \frac{25}{384 \kappa^3} + \OO(\kappa^{-4}), \\[2mm]
\frac{1}{2} \log \sig^2 &= - \frac{1}{2} \log(2 \kappa) + \frac{1}{8 \kappa} + \frac{7}{64 \kappa^2} - \frac{11}{384 \kappa^3} + \OO(\kappa^{-4}),
\end{aligned}
\end{equation}
so that
\begin{equation}\label{eq:expansion.simplify}
- \frac{1}{2} \log(2\pi) - \log \{I_0(\kappa)\} + \frac{1}{2} \log 2 + \frac{1}{2} \log \sig^2 = - \kappa + \frac{3}{64 \kappa^2} - \frac{3}{32 \kappa^3} + \OO(\kappa^{-4}).
\end{equation}
By putting~\eqref{eq:cos.expansion} and~\eqref{eq:expansion.simplify} together in~\eqref{eq:LLT.beginning}, we obtain
\[
\log\left\{\frac{\sqrt{2} \sig f_{\mu,\kappa}(x)}{\phi(\delta_x / \sqrt{2})}\right\} = \frac{1}{\kappa} \left(\frac{\delta_x^4}{96} - \frac{\delta_x^2}{16}\right) + \frac{1}{\kappa^2} \left(- \frac{\delta_x^6}{5760} + \frac{\delta_x^4}{192} - \frac{\delta_x^2}{16} + \frac{3}{64}\right) + \OO_{\eta}\left(\frac{1 + |\delta_x|^8}{\kappa^3}\right),
\]
which proves~\eqref{eq:thm:LLT.von.Mises.eq.log}.

To obtain~\eqref{eq:thm:LLT.von.Mises.eq} and conclude the proof, we take the exponential on both sides of the last equation with $\eta = \kappa^{-1/4} \widetilde{\eta}$, and we expand the right-hand side with
\begin{equation}\label{eq:Taylor.exponential}
e^y = 1 + y + \frac{y^2}{2} + \OO(e^U y^3), \quad \text{for } -\infty < y \leq U < \infty.
\end{equation}
We get
\[
\begin{aligned}
\frac{\sqrt{2} \sig f_{\mu,\kappa}(x)}{\phi(\delta_x / \sqrt{2})}
&= 1 + \frac{1}{\kappa} \left(\frac{\delta_x^4}{96} - \frac{\delta_x^2}{16}\right) + \frac{1}{\kappa^2} \left(- \frac{\delta_x^6}{5760} + \frac{\delta_x^4}{192} - \frac{\delta_x^2}{16} + \frac{3}{64}\right) \\
&\qquad+ \frac{1}{2\kappa^2} \left(\frac{\delta_x^4}{96} - \frac{\delta_x^2}{16}\right)^2 + \OO_{\widetilde{\eta}}\left(\frac{1 + |\delta_x|^{12}}{\kappa^3}\right) \\
&= 1 + \frac{1}{\kappa} \left(\frac{\delta_x^4}{96} - \frac{\delta_x^2}{16}\right) + \frac{1}{\kappa^2} \left(\frac{\delta_x^8}{18432} - \frac{19 \delta_x^6}{23040} + \frac{11 \delta_x^4}{1536} - \frac{\delta_x^2}{16} + \frac{3}{64}\right) + \OO_{\widetilde{\eta}}\left(\frac{1 + |\delta_x|^{12}}{\kappa^3}\right),
\end{aligned}
\]
which yields the claim~\eqref{eq:thm:LLT.von.Mises.eq} after some rearrangements. This concludes the proof.
\end{proof}

\begin{proof}[Proof of Corollary~\ref{cor:cdf}]
By integrating from $x$ to $\infty$ the result of Theorem~\ref{thm:LLT.von.Mises}, one gets
\[
\begin{aligned}
\int_x^{\infty} f_{\mu,\kappa}(y) \rd y
&= \Psi(\widetilde{\delta}_x) + \frac{1}{\kappa} \left(\frac{\Psi_4(\widetilde{\delta}_x)}{24} - \frac{\Psi_2(\widetilde{\delta}_x)}{8}\right) \\
&\quad+ \frac{1}{\kappa^2} \left(\frac{\Psi_8(\widetilde{\delta}_x)}{1152} - \frac{19 \Psi_6(\widetilde{\delta}_x)}{2880} + \frac{11 \Psi_4(\widetilde{\delta}_x)}{384}- \frac{\Psi_2(\widetilde{\delta}_x)}{8} + \frac{3}{64}\right) + \OO_{\widetilde{\eta}}\left(\frac{1 + |\widetilde{\delta}_x|^{11}}{\kappa^3}\right),
\end{aligned}
\]
where
\[
\Psi(z) = \int_z^{\infty} \phi(y) \rd y, \quad \Psi_j(z) = \int_z^{\infty} y^j \phi(y) \rd y, \quad z\in \R, ~j\in \N.
\]
Since
\[
\begin{aligned}
\Psi_2(z) &= z \phi(z) + \Psi(z), \\
\Psi_4(z) &= (3z + z^3) \phi(z) + 3 \Psi(z), \\
\Psi_6(z) &= (15z + 5z^3 + z^5) \phi(z) + 15 \Psi(z), \\
\Psi_8(z) &= (105z + 35z^3 + 7z^5 + z^7) \phi(z) + 105 \Psi(z),
\end{aligned}
\]
one deduces
\[
\begin{aligned}
F_{\mu,\kappa}(x) = \Phi(\widetilde{\delta}_x) + \phi(\widetilde{\delta}_x) \left\{\frac{1}{\kappa} \left(\frac{\widetilde{\delta}_x^3}{24}\right) + \frac{1}{\kappa^2} \left(\frac{\widetilde{\delta}_x^7}{1152} - \frac{\widetilde{\delta}_x^5}{1920} + \frac{5 \widetilde{\delta}_x^3}{192} - \frac{3 \widetilde{\delta}_x}{64}\right)\right\} + \OO_{\widetilde{\eta}}\left(\frac{1 + |\widetilde{\delta}_x|^{11}}{\kappa^3}\right).
\end{aligned}
\]
This concludes the proof.
\end{proof}

%
%

\addcontentsline{toc}{chapter}{References}

\bibliographystyle{authordate1}
\bibliography{Ouimet_2023_LLT_von_Mises_bib}

\begin{thebibliography}{}

\bibitem[\protect\citename{Abramowitz \& Stegun, }1964]{MR0167642}
Abramowitz, M., \& Stegun, I.~A. 1964.
\newblock {\em Handbook of {M}athematical {F}unctions {W}ith {F}ormulas,
  {G}raphs, and {M}athematical {T}ables}.
\newblock National Bureau of Standards Applied Mathematics Series, vol. 55.
\newblock For sale by the Superintendent of Documents, U.S. Government Printing
  Office, Washington, D.C.
\newblock \href{http://www.ams.org/mathscinet-getitem?mr=MR0167642}{MR0167642}.

\bibitem[\protect\citename{Hill, }1976]{doi:10.2307/2335751}
Hill, G.~W. 1976.
\newblock New approximations to the von {M}ises distribution.
\newblock {\em Biometrika}, {\bf 63}(3), 673--676.
\newblock \href{https://www.doi.org/10.2307/2335751}{doi:10.2307/2335751}.

\bibitem[\protect\citename{Hill \& Davis, }1968]{MR226762}
Hill, G.~W., \& Davis, A.~W. 1968.
\newblock Generalized asymptotic expansions of {C}ornish-{F}isher type.
\newblock {\em Ann. Math. Statist.}, {\bf 39}, 1264--1273.
\newblock \href{http://www.ams.org/mathscinet-getitem?mr=MR226762}{MR226762}.

\bibitem[\protect\citename{Jammalamadaka \& SenGupta, }2001]{MR1836122}
Jammalamadaka, S.~R., \& SenGupta, A. 2001.
\newblock {\em Topics in {C}ircular {S}tatistics}.
\newblock Series on Multivariate Analysis, vol. 5.
\newblock World Scientific Publishing Co., Inc., River Edge, NJ.
\newblock With 1 IBM-PC floppy disk (3.5 inch; HD).

\bibitem[\protect\citename{Kent, }1978]{MR501259}
Kent, J. 1978.
\newblock Limiting behaviour of the von {M}ises-{F}isher distribution.
\newblock {\em Math. Proc. Cambridge Philos. Soc.}, {\bf 84}(3), 531--536.
\newblock \href{http://www.ams.org/mathscinet-getitem?mr=MR501259}{MR501259}.

\bibitem[\protect\citename{Mardia \& Jupp, }2000]{MR1828667}
Mardia, K.~V., \& Jupp, P.~E. 2000.
\newblock {\em Directional {S}tatistics}.
\newblock Wiley Series in Probability and Statistics.
\newblock John Wiley \& Sons, Ltd., Chichester.
\newblock \href{http://www.ams.org/mathscinet-getitem?mr=MR1828667}{MR1828667}.

\bibitem[\protect\citename{Ouimet, }2022a]{MR4466042}
Ouimet, F. 2022a.
\newblock Refined normal approximations for the central and noncentral
  chi-square distributions and some applications.
\newblock {\em Statistics}, {\bf 56}(4), 935--956.
\newblock \href{http://www.ams.org/mathscinet-getitem?mr=MR4466042}{MR4466042}.

\bibitem[\protect\citename{Ouimet, }2022b]{MR4449387}
Ouimet, F. 2022b.
\newblock Refined normal approximations for the {S}tudent distribution.
\newblock {\em J. Class. Anal.}, {\bf 20}(1), 23--33.
\newblock \href{http://www.ams.org/mathscinet-getitem?mr=MR4449387}{MR4449387}.

\end{thebibliography}

\end{document}